\documentclass[11pt]{article}

\usepackage[letterpaper,left=1.25in,right=1.25in,top=1in,bottom=1in]{geometry}
\usepackage{amsmath,amssymb,amsthm}
\usepackage{setspace}
\usepackage{microtype}
\usepackage{needspace}
\usepackage{hyperref}

\allowdisplaybreaks
\newtheorem{theorem}{Theorem}[section]
\newtheorem{conjecture}[theorem]{Conjecture}
\newtheorem{lemma}[theorem]{Lemma}

\newtheorem*{restatedsmalltheorem}{Theorem~\ref{thm:small-explicit}}

\hypersetup{
  hidelinks,
  pdfauthor={Xinheng Lin},
  pdftitle={Coloring Small Kt-Minor-Free Graphs},
  pdfsubject={Graph coloring and complete minors},
  pdfkeywords={Hadwiger's conjecture, graph minors, chromatic number, independence number, star contractions}
}

\title{Coloring Small \texorpdfstring{$K_t$}{Kt}-Minor-Free Graphs}
\author{Xinheng Lin\thanks{%
  2020 Mathematics Subject Classification: 05C15, 05C83.\protect\\
  Email: \href{mailto:2500310007@fzu.edu.cn}{2500310007@fzu.edu.cn}}%
  \protect\\[1.4ex]
  \normalsize Center for Discrete Mathematics, Fuzhou University\protect\\[-0.1ex]
  \normalsize Fuzhou, P.\ R.\ China}
\date{September 8, 2026}

\begin{document}

\setlength{\abovedisplayskip}{6pt plus 2pt minus 2pt}
\setlength{\belowdisplayskip}{6pt plus 2pt minus 2pt}
\setlength{\abovedisplayshortskip}{0pt plus 2pt}
\setlength{\belowdisplayshortskip}{3pt plus 2pt minus 1pt}

\maketitle

\begin{abstract}
Delcourt and Postle proved that every $K_t$-minor-free graph is
$O(t\log\log t)$-colorable and reduced the Linear Hadwiger Conjecture to
coloring $K_t$-minor-free graphs on $O(t\log^4 t)$ vertices. In this paper,
we use the star contraction technique to improve their bound for small graphs
and use their reduction to extend
this improvement to all $K_t$-minor-free graphs. Thus we improve their
$O(t\log\log t)$ bound to $O(t\sqrt{\log\log t})$.

\noindent\textbf{Key Words:} Hadwiger's conjecture, graph minors, graph coloring,
independence number, star contractions
\end{abstract}

\section{Introduction}

All graphs in this paper are finite and simple. For a graph $G$, we denote
its chromatic number by $\chi(G)$. For graphs $H$
and $G$, we write $H\preccurlyeq_{\mathrm m} G$ if $H$ is a minor of $G$. We denote by $K_t$
the complete graph on $t$ vertices, and say that $G$ is $K_t$-minor-free if
$K_t\not\preccurlyeq_{\mathrm m} G$.

In 1943, Hadwiger~\cite{hadwiger} proposed the following conjecture.

\begin{conjecture}[Hadwiger's Conjecture~\cite{hadwiger}]\label{conj:hadwiger}
For every integer $t\geq1$, every $K_t$-minor-free graph is
$(t-1)$-colorable.
\end{conjecture}

Although 83 years have passed, Hadwiger's conjecture remains open for every
$t\geq7$. The case $t=5$ follows from Wagner's structure
theorem~\cite{wagner} and the Four Color Theorem~\cite{appel-haken,four-color}, while
the case $t=6$ was proved by Robertson, Seymour and
Thomas~\cite{robertson-seymour-thomas} in 1993. The difficulty of the
conjecture motivates the following linear weakening, raised by Reed and
Seymour~\cite{reed-seymour}.

\begin{conjecture}[Linear Hadwiger Conjecture~\cite{reed-seymour}]\label{conj:linear-hadwiger}
There exists an absolute constant $C_{\ref{conj:linear-hadwiger}}>0$ such that, for every integer
$t\geq1$, every $K_t$-minor-free graph $G$ satisfies $\chi(G)\leq C_{\ref{conj:linear-hadwiger}}t$.
\end{conjecture}

In the 1980s, Kostochka~\cite{kostochka} and Thomason~\cite{thomason}
independently proved that every $K_t$-minor-free graph has average degree
$O(t\sqrt{\log t})$, and hence is $O(t\sqrt{\log t})$-colorable. This
remained the best general bound for several decades. In 2019, Norin,
Postle and Song~\cite{norin-postle-song} broke this barrier by proving
that, for every fixed $\beta>1/4$, every $K_t$-minor-free graph is
$O(t(\log t)^\beta)$-colorable.
The strongest previously known general bound is due to Delcourt and
Postle~\cite{delcourt-postle}.

\begin{theorem}[Delcourt--Postle~\cite{delcourt-postle}]\label{thm:dp-coloring}
There exists an absolute constant $C_{\ref{thm:dp-coloring}}>0$ such that, for every integer
$t\geq3$, every $K_t$-minor-free graph $G$ satisfies
\[
  \chi(G)\leq C_{\ref{thm:dp-coloring}}t\log\log t.
\]
\end{theorem}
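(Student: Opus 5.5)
Theorem~\ref{thm:dp-coloring} is quoted from~\cite{delcourt-postle}, so here I only outline the route by which Delcourt and Postle prove it. The present paper will reuse the same skeleton with an improved ingredient for small graphs.

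The plan has two stages: a reduction to small graphs, and a separate bound for small graphs.

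\textbf{Reduction to small graphs.} First I will prove a statement of the following form. If every $K_t$-minor-free graph on at most $O(t\log^4 t)$ vertices is $f(t)$-colorable, then every $K_t$-minor-free graph is $O(t+f(t))$-colorable. The argument is by induction, building on the Norin--Postle--Song density increment framework.

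\begin{itemize}
\item Take a minimal counterexample $G$. It has minimum degree at least about $Ct$, and by a Mader-type lemma it also has high connectivity.
\item Apply the density increment theorem to $G$. Either it yields a dense minor, or it yields a small, very dense subgraph $H$ with $|V(H)|=O(t\log^4 t)$.
\item In the first case, a dense minor on few vertices forces a $K_t$ minor by known linking/density arguments, which is impossible.
\item In the second case, I will use high connectivity to route many disjoint paths. These let me glue a small $K_{t}$-minor-free piece onto $G$ and recolor, or else directly build a $K_t$ minor using dense small pieces.
\end{itemize}

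\textbf{Small graphs.} Next, for a $K_t$-minor-free graph $G$ on $n=O(t\log^4 t)$ vertices, I will show $\chi(G)=O(t\log\log t)$. Since $\chi(G)\le n/\alpha(G)$ up to greedy repetition, it suffices to find large independent sets. The Duchet--Meyniel bound $\alpha(G)\ge n/(2t-2)$ gives only $\chi = O(t\log n)$, because repeatedly extracting independent sets costs a logarithmic factor. Instead I will use a fractional or iterative argument that removes independent sets of size about $n/(t\cdot\mathrm{polylog})$. Because $\log n = O(\log t)$, the iteration yields $O(t\log\log t)$ colors.

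Concretely, I will color greedily by repeatedly removing a maximum independent set. While the current graph has $m$ vertices, each removal deletes at least $m/(2t)$ of them. Halving $m$ therefore costs $O(t)$ colors. Going from $n$ down to about $t$ vertices then costs $O(t\log(n/t))=O(t\log\log t)$ colors, and the final $t$ vertices take at most $t$ further colors.

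\textbf{Main obstacle.} The main difficulty is the reduction step, specifically extending the coloring of the small dense pieces while preserving minor-freeness under the gluing. I would follow Delcourt--Postle's connectivity and linkage lemmas verbatim for this part.
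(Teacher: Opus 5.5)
Your outline follows the same route the paper relies on (via \cite{delcourt-postle}): Delcourt and Postle's reduction (Theorem~\ref{thm:dp}), combined with greedy deletion of Duchet--Meyniel independent sets (Theorem~\ref{thm:greedy-bound}), gives $\chi(H)=O(a\log\log a)$ for the small $K_a$-minor-free pieces with $v(H)\leq C_{\ref{thm:dp}}a\log^4 a$, hence $f_{\ref{thm:dp}}(G,t)=O(\log\log t)$ and $\chi(G)=O(t\log\log t)$. Your intermediate claim that Duchet--Meyniel yields only $O(t\log n)$, and that one needs independent sets of size $n/(t\cdot\mathrm{polylog})$, is wrong and should be dropped (such sets would not give $O(t\log\log t)$ anyway); your ``concretely'' paragraph is exactly the correct $O(t\log(n/t))$ greedy bound, and it holds uniformly for every $a\in[t/\sqrt{\log t},t]$, which is what the reduction actually requires.
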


Delcourt and Postle~\cite{delcourt-postle} also reduced the Linear
Hadwiger Conjecture to coloring $K_t$-minor-free graphs on
$O(t\log^4 t)$ vertices. Note that the author~\cite{lin-small-connected}
improved this bound to $O(t\log^2 t)$. In Section~\ref{sec:preliminaries}, we establish an
improved bound for the chromatic number of such small graphs. Combining
this estimate with their reduction, we improve Theorem~\ref{thm:dp-coloring}
as follows.

\begin{theorem}\label{thm:global}
There exists an absolute constant $C_{\ref{thm:global}}>0$ such that, for every integer
$t\geq3$, every $K_t$-minor-free graph $G$ satisfies
\[
  \chi(G)\leq C_{\ref{thm:global}}t\sqrt{\log\log t}.
\]
\end{theorem}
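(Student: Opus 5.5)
The plan is a two-stage argument: plug an improved bound for \emph{small} $K_t$-minor-free graphs into the reduction of Delcourt and Postle~\cite{delcourt-postle}. Their reduction, in the quantitative form I will invoke, says the following. Let $f(t)$ be such that every $K_t$-minor-free graph on at most $C t\log^4 t$ vertices (or $Ct\log^2 t$, using~\cite{lin-small-connected}) is $f(t)$-colorable, where $f(t)/t$ is nondecreasing. Then every $K_t$-minor-free graph is $O(t+f(t))$-colorable; possibly $f$ must be evaluated at $O(t)$ instead of $t$, which is harmless for a function of order $t\sqrt{\log\log t}$. Given this, Theorem~\ref{thm:global} reduces to the following small-graph estimate (which I expect to be the theorem proved in Section~\ref{sec:preliminaries}): every $K_t$-minor-free graph $G$ on $n\le Ct\log^4 t$ vertices satisfies $\chi(G)=O(t\sqrt{\log\log t})$.

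For the small-graph estimate I would use the standard "independence number plus greedy peeling" scheme. First, show that any $K_t$-minor-free $H$ on $m$ vertices has $\alpha(H)\ge c\,m/(t\sqrt{\log\log t})$ whenever $m\ge t\sqrt{\log\log t}$. Then repeatedly remove a maximum independent set, so each color class shrinks $m$ by a factor $1-c/(t\sqrt{\log\log t})$. Starting from $n\le t\log^4 t$, we need $O(t\sqrt{\log\log t}\cdot\log(\log^4 t/\sqrt{\log\log t}))$ colors, which is too many by a $\log\log t$ factor. So the peeling must be sharper: the independence ratio should improve as $m$ shrinks, ideally $\alpha(H)\ge c\,m/(t\sqrt{\log(m/t)})$ or better. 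Summing $\sum t\sqrt{\log(m/t)}$ over the dyadic scales $m=t2^j$, $j\le 4\log\log t$, gives roughly $t\sqrt{\log\log t}\cdot\log\log t$, which is still too much. Hence the real target is a direct bound on fractional-type density, namely $\alpha(H)\ge m/O(t\sqrt{\log\log t})$ together with a domination-style color-class argument, or equivalently a bound that reaches scale $m\le 2t$ in $O(t\sqrt{\log\log t})$ total colors. Once $m\le O(t)$, finish with $O(t)$ further colors trivially.

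The key step, and the main obstacle, is the independence bound via star contraction. Take a maximal family of vertex-disjoint stars, each consisting of a center $v$ with $s\approx\sqrt{\log\log t}$ leaves that are pairwise nonadjacent. Contract each star. If the stars cover few vertices, the leftover vertices have small "independence-neighborhoods", and I expect to extract a large independent set there directly. Otherwise the contracted graph has $n'\le n/s$ vertices and is still $K_t$-minor-free, so by Kostochka--Thomason its density forces an independent set of size about $n'/(t\sqrt{\log(n'/t)})$ in the contracted graph. This set lifts back to an independent set of leaves with a gain factor $s$. Balancing $s$ against $\sqrt{\log(n/t)}\le\sqrt{4\log\log t}$ is where the $\sqrt{\log\log t}$ arises. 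The first thing I would try is to iterate the contraction until the graph is dense enough that the Kostochka--Thomason (small-graph, $\sqrt{\log(n/t)}$-type) density bound gives the result. The delicate points are proving the lifting inequality cleanly and making sure the total color count telescopes rather than picking up an extra $\log\log t$ factor.
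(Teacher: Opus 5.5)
Your outer step, feeding a small-graph estimate into the Delcourt--Postle reduction, is the same as the paper's. But the small-graph estimate is the real content, and the mechanism you propose for it cannot deliver it.

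\textbf{Why your mechanism fails.}
\begin{itemize}
\item Any argument where each color pays for deleting one independent set is limited by the guaranteed independence ratio. For $K_t$-minor-free graphs that ratio is already known and cannot be improved. Duchet--Meyniel (Theorem~\ref{thm:minor-independence}) gives $\alpha(H)\ge v(H)/(2(t-1))$ for every minor, and disjoint copies of $K_{t-1}$ show this is tight up to a factor $2$.
\item Peeling at this rate from $O(t\log^4 t)$ vertices certifies only $O(t\log\log t)$ colors. That is exactly Theorem~\ref{thm:greedy-bound}, i.e.\ the old bound.
\item Your intermediate target $\alpha(H)\ge m/O(t\sqrt{\log\log t})$ is weaker than what Duchet--Meyniel gives for free, and the Kostochka--Thomason density route is weaker still. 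No refinement of that target helps.
\item Star contraction used to lift independent sets gives no net gain. Lifting multiplies the size by $s$. But if the stars cover most of $G$, the contracted graph has only about $n/(s+1)$ vertices, so an independent set of size about $n'/t$ there lifts to size about $n/t$ again.
\item Without minimum-degree control, stars with $s\approx\sqrt{\log\log t}$ pairwise nonadjacent leaves need not exist at all.
\item The ``few stars'' branch and the ``domination-style color-class argument'' are left unspecified.
\end{itemize}

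\textbf{The missing idea.} Lift \emph{colorings} rather than independent sets, and let the progress per color grow with the current chromatic number.
\begin{itemize}
\item If vertex-disjoint induced stars have pairwise nonadjacent centers, contracting them costs at most one color. Each leaf takes the color of its contracted vertex, and all centers get one new color (Lemma~\ref{lem:lift}).
\item Pass to a minor-minimal minor $F_i$ with $\chi(F_i)=i$ (Lemma~\ref{lem:critical}). This gives $\delta(F_i)\ge i-1$ and guarantees the contraction drops $\chi$ by exactly one.
\item Take $r=2(t-1)$ and $v(F_i)\le r^2$. Choose $\lfloor v(F_i)/r\rfloor$ independent centers. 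Then, greedily via Duchet--Meyniel in each center's unused neighborhood, choose $\lfloor (i-1)/(2r)\rfloor$ independent leaves per center. The hypothesis $v\le r^2$ keeps those neighborhoods from being exhausted. So one color removes at least $v(F_i)(i-1)/(8r^2)$ vertices (Lemma~\ref{lem:packing}).
\item Multiply the factors $1-(i-1)/(8r^2)$ over $i=2r+1,\dots,\chi(G)$ and use $v(F_{2r})\ge 2r$. This gives $\chi(G)(\chi(G)-1)\le 2r(2r-1)+16r^2\log(v(G)/(2r))$, hence $\chi(G)\le 2r+4r\sqrt{\log^{+}(v(G)/(2r))}$.
\end{itemize}
The square root comes from summing the arithmetic progression $\sum(i-1)$, not from balancing a fixed $s$ against a density bound.

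Finally, apply this to the small subgraphs in the reduction, viewing them as $K_{Ca}$-minor-free for a large constant $C$ so that their order is at most $4(Ca-1)^2$. This gives $\chi(H)/a=O(\sqrt{\log\log t})$, which completes the proof.
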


We prove Theorem~\ref{thm:global} in Section~\ref{sec:main-proof}.

\section{Preliminaries}\label{sec:preliminaries}

All logarithms are natural. For $x>0$, we write
$\log^{+} x:=\max\{0,\log x\}$. For a graph $G$, let $\alpha(G)$ denote
its independence number, let $\delta(G)$ denote its minimum degree,
and write $v(G):=|V(G)|$.
The neighborhood and degree of a vertex $v\in V(G)$ are denoted by
$N_G(v)$ and $\deg_G(v)$, respectively. For $X\subseteq V(G)$, let $G[X]$
be the subgraph induced by $X$.

Following Diestel~\cite[Section~1.6]{diestel}, a \emph{star} is a graph
of the form $K_{1,r}$. If $c\in V(G)$ and
$L\subseteq N_G(c)$ is a nonempty independent set, then
$G[\{c\}\cup L]$ is an induced
star with \emph{center} $c$ and \emph{leaf set} $L$. Note that when considering several
vertex-disjoint induced stars, we allow edges between different stars.

If $A\subseteq V(G)$ is nonempty and $G[A]$ is connected, then $G/A$
denotes the simple graph obtained from $G$ by contracting $A$ to a single
vertex, deleting loops, and suppressing parallel edges. We denote the
resulting vertex by $v_A$.

We use the following consequence of the theorem of Duchet and
Meyniel~\cite{duchet-meyniel}, which plays a vital role in coloring small
$K_t$-minor-free graphs.

\begin{theorem}[Duchet--Meyniel~\cite{duchet-meyniel}]\label{thm:minor-independence}
Let $t\geq2$ be an integer, and let $G$ be a $K_t$-minor-free graph.
Every nonempty minor $H$ of $G$ satisfies
\[
  \alpha(H)\geq\frac{v(H)}{2(t-1)}.
\]
\end{theorem}

Theorem~\ref{thm:minor-independence} shows that every nonempty
$K_t$-minor-free graph contains a large independent set. Combining
Theorem~\ref{thm:minor-independence} with
\cite[Corollary~3.2]{delcourt-postle}, with $p:=2(t-1)$, gives the
following bound.

\begin{theorem}[Delcourt--Postle~\cite{delcourt-postle}]\label{thm:greedy-bound}
Let $t\geq2$ be an integer, and let $G$ be a nonempty $K_t$-minor-free
graph. Then
\[
  \chi(G)\leq2(t-1)\left(2+
  \log^{+}\!\left(\frac{v(G)}{2(t-1)}\right)\right).
\]
\end{theorem}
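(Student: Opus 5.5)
The plan is to prove Theorem~\ref{thm:greedy-bound} by the greedy strategy behind \cite[Corollary~3.2]{delcourt-postle}: repeatedly remove a maximum independent set and give it a fresh color. The lower bound on independent sets that drives this comes from Theorem~\ref{thm:minor-independence}. Set $p:=2(t-1)$ and $n:=v(G)$. Every induced subgraph of $G$ is a minor of $G$, so every nonempty induced subgraph $H$ satisfies $\alpha(H)\geq v(H)/p$.

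\textbf{Step 1: the phase with many vertices remaining.} Let $n_0:=n$. While the remaining graph has $n_i>p$ vertices, remove a maximum independent set and color it with a new color. Each such step removes at least $n_i/p$ vertices, so
\[
  n_{i+1}\leq n_i\left(1-\frac1p\right),
  \qquad\text{hence}\qquad
  n_i\leq n\left(1-\frac1p\right)^{i}\leq n e^{-i/p}.
\]
If $n\le p$, skip this phase entirely. Otherwise, after $k:=\lceil p\log(n/p)\rceil$ steps we have $n_k\leq p$. Phase one therefore uses at most $p\log^{+}(n/p)+1$ colors.

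\textbf{Step 2: the phase with at most $p$ vertices remaining.} Now the remaining graph has $m\leq p$ vertices, and the crude bound of one color per vertex costs $m\leq p$ colors. Combined with Step 1, this gives a total of at most
\[
  p\log^{+}(n/p)+1+p\leq p\bigl(2+\log^{+}(n/p)\bigr),
\]
since $p\geq2$. This is exactly the claimed bound $2(t-1)\bigl(2+\log^{+}(v(G)/(2(t-1)))\bigr)$.

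The only delicate point is the ceiling and rounding bookkeeping when the two phases are joined. This is absorbed because the extra additive $p$ in the bound is generous: the bound allows $2p$ colors beyond the logarithmic term, while the argument needs only $p+1$. If greater care is wanted, one can instead stop Step 1 as soon as $n_i\le p$, and bound the number of steps taken before that by the inequality $(1-1/p)^i\le e^{-i/p}$.
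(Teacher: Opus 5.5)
Your proof is correct and is essentially the argument the paper relies on: the paper cites \cite[Corollary~3.2]{delcourt-postle} with $p=2(t-1)$ together with Theorem~\ref{thm:minor-independence}, and sketches it as repeatedly deleting a large independent set. In that sketch $\chi$ drops by at most one per step while the order shrinks by a factor $1-1/p$, until fewer vertices than $\chi$ remain; this is just the contradiction form of your direct greedy coloring, and your bookkeeping (at most $\lceil p\log(n/p)\rceil\le p\log(n/p)+1$ deletion steps, at most $p$ further colors, and $1+p\le 2p$) is sound.
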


The idea of Delcourt and Postle's proof of Theorem~\ref{thm:greedy-bound}
is to repeatedly delete a large independent set. At each step, the
chromatic number decreases by at most one, while the number of vertices
decreases by a fixed proportion. If the initial chromatic number were
too large, this process would eventually leave fewer vertices than the
chromatic number, a contradiction. Our idea is similar, but we repeatedly
contract stars instead of deleting independent sets. This gives the
following result.

\begin{theorem}\label{thm:small-explicit}
Let $t\geq2$ be an integer, and let $G$ be a nonempty $K_t$-minor-free
graph with $v(G)\leq4(t-1)^2$. Then
\[
  \chi(G)\leq4(t-1)+8(t-1)
  \sqrt{\log^{+}\!\left(\frac{v(G)}{4(t-1)}\right)}.
\]
\end{theorem}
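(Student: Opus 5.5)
The plan is to run the Delcourt--Postle iteration behind Theorem~\ref{thm:greedy-bound}, but to contract a batch of vertex-disjoint induced stars at each step instead of deleting an independent set. The basic observation is this. Let $S_1,\dots,S_m$ be vertex-disjoint induced stars with centers $c_i$ and leaf sets $L_i$, and suppose the centers $\{c_1,\dots,c_m\}$ form an independent set. Let $G'$ be obtained by contracting each $S_i$ to $v_{S_i}$. Then $\chi(G)\le\chi(G')+1$. To see this, take a proper colouring of $G'$ and give every vertex of $L_i$ the colour of $v_{S_i}$. This is proper: $L_i$ is independent, every $G$-neighbour of a leaf outside $S_i$ lies in a vertex or contracted vertex of $G'$ adjacent to $v_{S_i}$, and distinct stars are distinct vertices of $G'$. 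Then give all centers one new colour. Moreover $G'$ is a minor of $G$, so it is $K_t$-minor-free, Theorem~\ref{thm:minor-independence} applies to all its minors, and $v(G')=v(G)-\sum_i|L_i|$. So each new colour is paid for by removing $\sum_i|L_i|$ vertices.

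The key step is to show that when the current chromatic number is $k$, one can choose such a batch with $\sum_i|L_i|\gtrsim nk/(8(t-1)^2)$, where $n$ is the current number of vertices. First I pass to a $k$-critical subgraph $H$, so $\delta(H)\ge k-1$ and $v(H)\le n$. For any vertex $c$, the graph $H[N_H(c)]$ is a minor of $G$, so Theorem~\ref{thm:minor-independence} gives an independent leaf set of size at least $(k-1)/(2(t-1))$ inside $N_H(c)$. I then build the batch greedily:
\begin{itemize}
\item take a large independent set $I$ of candidate centers, with $|I|\ge n/(2(t-1))$, by Theorem~\ref{thm:minor-independence};
\item process the centers in $I$ one at a time, and for each pick an independent set of leaves in its neighbourhood among the vertices not yet used, again by Theorem~\ref{thm:minor-independence};
\item stop adding stars once a constant fraction of $V(H)$ has been consumed.
\end{itemize}
While fewer than, say, half the neighbours of the next center are used up, that center still gets about $k/(4(t-1))$ leaves. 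Balancing the two regimes yields a total leaf count of order $\min\{n,\ nk/(t-1)^2\}$. The hypothesis $v(G)\le4(t-1)^2$, together with $k\le n$, should be what guarantees that the second term is the binding one and that the fraction removed, $k/(8(t-1)^2)$, is below $1$.

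Given this, iterate. Let $n_j$ be the number of vertices after $j$ steps and $k_j\ge\chi(G)-j$ the chromatic number at that point. Then
\[
n_{j+1}\le n_j\Bigl(1-\tfrac{k_j}{8(t-1)^2}\Bigr)\le n_j\exp\Bigl(-\tfrac{k_j}{8(t-1)^2}\Bigr).
\]
Suppose $\chi(G)=4(t-1)+x$ with $x>0$. Run the process for $x$ steps, during which $k_j\ge4(t-1)$. Summing the exponents, which form an arithmetic progression, gives $\log(n_0/n_x)\gtrsim x^2/(16(t-1)^2)$. On the other hand $n_x\ge k_x\ge4(t-1)$, since a graph with chromatic number $k_x$ has at least $k_x$ vertices. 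Hence $x^2\lesssim16(t-1)^2\log(v(G)/(4(t-1)))$, which has the shape $x\le8(t-1)\sqrt{\log^{+}(v(G)/4(t-1))}$. Reaching exactly the constants $4(t-1)$ and $8(t-1)$ is then a matter of tuning the fractions in the greedy step.

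I expect the main obstacle to be that greedy step. Its requirements pull against each other: the centers must be pairwise non-adjacent, all stars must be disjoint, and leaves must not be centers. At the same time, the total number of leaves must be proportional to $n\cdot k/t^2$ rather than merely $k/t$. Contracting a single star per colour only gives $k^2\lesssim tn$, which is far too weak. My first attempt at the counting is to charge each center's lost neighbours to earlier stars: each earlier star blocks at most its own $1+|L_i|$ vertices. This shows that until about $n/4$ vertices are used, at least $|I|/2$ centers retain half their degree.
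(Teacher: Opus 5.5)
\textbf{Verdict: genuine gap in the packing step.} Your overall architecture matches the paper's. The lifting observation is Lemma~\ref{lem:lift}, passing to a critical graph gives $\delta\ge k-1$, and the product estimate ending with $n\ge k\ge 4(t-1)$ is the paper's final computation. Using critical subgraphs with $k_j\ge\chi(G)-j$ instead of the paper's minor-minimal $F_i$ is harmless. However, the step you flag as the main obstacle is not proved, and the charging argument you sketch for it fails. Knowing that the used set $U$ has size at most $n/4$ says nothing about $|N_H(c)\cap U|$, because a single used vertex may be adjacent to every remaining center. A bound on $|U|$ protects half of every neighbourhood only when $|U|<(k-1)/2$. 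Put $r=2(t-1)$; in the relevant regime $n$ can be close to $r^2$ while $k$ is only slightly above $2r$. So a used set of size about $k-1\ll n/4$ can already swallow the neighbourhoods of all remaining centers, for instance when the centers have nearly the same neighbourhood. The stopping rule therefore has to be at the scale of the minimum degree, not of $v(H)$, and the ``two regimes'' $\min\{n,nk/(t-1)^2\}$ never arise. (The tension you mention about leaves not being centers is also illusory: a neighbour of a center in the independent set $I$ is never in $I$.)

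\textbf{The missing idea} is exactly where the hypothesis $v(G)\le 4(t-1)^2=r^2$ is used. Give each of $m=\lfloor n/r\rfloor$ independent centers the same quota $s=\lfloor (k-1)/(2r)\rfloor$, which is at least $1$ because $k\ge 2r+1$. Then $n\le r^2$ gives $ms\le \frac{n}{r}\cdot\frac{k-1}{2r}\le\frac{k-1}{2}$. Thus the total number of leaves ever used is at most half the minimum degree. However the neighbourhoods overlap, every center keeps at least $(k-1)/2$ unused neighbours, and Theorem~\ref{thm:minor-independence} supplies $s$ independent leaves among them. This yields $\sum_i|L_i|=ms\ge n(k-1)/(8r^2)=n(k-1)/(32(t-1)^2)$, which is up to a factor $4$ below the rate you assumed, because of the floors. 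The stated constants nevertheless come out exactly. The exponents sum to $(\chi(\chi-1)-2r(2r-1))/(16r^2)$, which gives $\chi(\chi-1)\le 2r(2r-1)+16r^2\log(v(G)/(2r))$ and hence $\chi(G)\le 2r+4r\sqrt{\log(v(G)/(2r))}$. Also, your own computation $x^2\lesssim16(t-1)^2\log(\cdot)$ would give $4(t-1)$ rather than $8(t-1)$. The constants need to be tracked through the floors rather than ``tuned''.
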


We prove Theorem~\ref{thm:small-explicit} after Lemma~\ref{lem:abstract-small}.

Lemma~\ref{lem:lift} below shows that contracting several vertex-disjoint
induced stars whose centers form an independent set decreases the
chromatic number by at most one.

\begin{lemma}\label{lem:lift}
If $G$ contains pairwise vertex-disjoint induced stars
$G[\{c_i\}\cup L_i]$, for every $1\leq i\leq m$, such that
$\{c_1,\ldots,c_m\}$ is an independent set, and $H$ is the simple graph
obtained by contracting each star to a single vertex, then
\[
  \chi(G)\leq\chi(H)+1.
\]
\end{lemma}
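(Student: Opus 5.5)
Take an optimal proper coloring $\varphi$ of $H$ with $k:=\chi(H)$ colors $1,\ldots,k$, and pull it back to a coloring of $G$ that uses one extra color $0$. Let $C:=\{c_1,\ldots,c_m\}$, write $S_i:=\{c_i\}\cup L_i$, and let $v_i$ be the vertex of $H$ obtained by contracting $S_i$. Vertices of $G$ outside all the stars keep their color from $H$. For each $i$, every leaf in $L_i$ receives the color $\varphi(v_i)$, and every center $c_i$ receives the new color $0$.

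We then check that this coloring $\psi$ of $G$ is proper, edge by edge.

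\textbf{Edges with both ends outside the stars.} These edges are also edges of $H$, so their endpoints get different colors.

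\textbf{Edges with exactly one end in a star.} Let the edge be $uw$ with $u\in S_i$ and $w$ outside all stars. Then $v_iw\in E(H)$. If $u\in L_i$, the endpoints are colored $\varphi(v_i)\neq\varphi(w)$. If $u=c_i$, then $u$ has color $0$ while $w$ has a color in $\{1,\ldots,k\}$.

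\textbf{Edges between two different stars.} Let the edge be $uw$ with $u\in S_i$, $w\in S_j$ and $i\neq j$. Then $v_iv_j\in E(H)$, so $\varphi(v_i)\neq\varphi(v_j)$. If both $u$ and $w$ are leaves, their colors $\varphi(v_i)$ and $\varphi(v_j)$ differ. If exactly one of them is a center, one endpoint has color $0$ and the other a nonzero color. Both cannot be centers, because $C$ is independent; this is exactly where that hypothesis is used.

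\textbf{Edges inside one star.} Let the edge lie in $S_i$. Since $L_i$ is independent, any such edge joins $c_i$ to a leaf, and these get colors $0$ and $\varphi(v_i)\neq0$.

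So $\psi$ is a proper coloring of $G$ with $k+1$ colors, which gives $\chi(G)\le\chi(H)+1$.

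Two small points need care. First, contraction is well defined because each star $S_i$ is connected, and suppressing parallel edges does not affect the adjacency relations used above. Second, the edge cases between stars are the only place any hypothesis matters, and there we used both that $C$ is independent and that each $L_i$ is independent. Neither step is a real obstacle; the main thing is to organize the cases carefully.
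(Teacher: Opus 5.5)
Your proof is correct and follows the paper's argument exactly: pull back an optimal coloring of $H$, give each leaf in $L_i$ the color of the contracted vertex, and give all centers one common new color; your case-by-case check spells out the step the paper dismisses with ``clearly.'' One small slip in your closing remark: independence of the $L_i$ is used in the within-star case, not the between-star case, but this does not affect the proof.
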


\begin{proof}
Let $x_i:=v_{\{c_i\}\cup L_i}$ for every $1\leq i\leq m$, and take
a proper $\chi(H)$-coloring of $H$. Give every vertex of $L_i$ the color
of $x_i$, and retain the colors of vertices outside the stars. Then, give
all centers $c_i$ a single new color. Clearly, this is a proper coloring of $G$.
\end{proof}

Let $G$ be a graph with large minimum degree such that every nonempty
minor of $G$ contains a large independent set. We aim to construct
vertex-disjoint induced stars in $G$ with many leaves in total.

\begin{lemma}\label{lem:packing}
Let $r\geq2$ and $i\geq2r+1$ be integers. Let $G$ be a nonempty graph with
$v(G)\leq r^2$ and $\delta(G)\geq i-1$. Suppose that every nonempty minor
$H$ of $G$ satisfies $\alpha(H)\geq v(H)/r$.
Then $G$ contains pairwise vertex-disjoint induced stars whose centers
form an independent set and whose leaf sets have total size at least
\[
  \frac{v(G)(i-1)}{8r^2}.
\]
\end{lemma}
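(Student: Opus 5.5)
The plan is a greedy construction in which every center is drawn from one fixed large independent set. Because leaves then never compete with centers for vertices, disjointness is easy to maintain. Set $n:=v(G)$ and $T:=n(i-1)/(8r^2)$. Applying the hypothesis to $H=G$ gives an independent set $I$ with $|I|\geq n/r$. All centers will be chosen from $I$, so the centers automatically form an independent set. Since $I$ is independent, $N_G(c)\cap I=\emptyset$ for every $c\in I$, so leaves never lie in $I$ and never use up potential centers.

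The construction runs as follows. Let $\mathcal L$ be the union of the leaf sets chosen so far. While $|\mathcal L|<T$, pick an unused vertex $c\in I$ and set $N':=N_G(c)\setminus\mathcal L$. Using $n\leq r^2$, we have $|\mathcal L|<T\leq (i-1)/8$, and $\deg_G(c)\geq i-1$, so $|N'|\geq 7(i-1)/8$. The induced subgraph $G[N']$ is a nonempty minor of $G$, so the hypothesis gives an independent set $L\subseteq N'$ with $|L|\geq 7(i-1)/(8r)$. Because $i-1\geq 2r$, this is at least $7/4$, so $L$ is nonempty. Then $G[\{c\}\cup L]$ is an induced star. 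It is disjoint from the earlier stars: its leaves avoid $\mathcal L$, its leaves avoid $I$ and hence all centers, and $c$ is a fresh vertex of $I$ that lies in no leaf set.

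It remains to check that the process never runs out of centers before $|\mathcal L|\geq T$; once that holds, the total leaf size is at least $T$, as claimed. Each step adds at least $7(i-1)/(8r)$ leaves. So if $k-1$ steps have been completed while $|\mathcal L|<T$, then
\[
k-1<\frac{T\cdot 8r}{7(i-1)}=\frac{n}{7r}<\frac{n}{r}\leq|I|.
\]
Hence an unused center always exists when the $k$-th step is needed. The only delicate point, which I expect to be the main obstacle, is keeping $|N'|$ large. This is handled by the a priori bound $|\mathcal L|<T\leq (i-1)/8$, which uses $v(G)\leq r^2$. That bound is also why centers never need to be excluded from neighborhoods: they lie in $I$, which is disjoint from $N_G(c)$ for every $c \in I$.
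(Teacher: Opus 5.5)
Your argument is correct and is essentially the paper's proof: you draw all centers from one independent set $I$ with $|I|\geq v(G)/r$, choose leaves greedily as independent sets in the unused part of each $N_G(c)$, and use $v(G)\leq r^2$ to keep the leaves already used a small fraction of $i-1$. The only difference is bookkeeping: the paper uses exactly $\lfloor v(G)/r\rfloor$ centers with $\lfloor (i-1)/(2r)\rfloor$ leaves each and absorbs the floors into the constant $8$, whereas you take maximum independent sets and stop once the target $T$ is reached, which avoids the floor estimates.
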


\begin{proof}
Put
\[
  m:=\left\lfloor\frac{v(G)}{r}\right\rfloor,
  \qquad s:=\left\lfloor\frac{i-1}{2r}\right\rfloor.
\]
Since $v(G)\geq i\geq2r+1$, we have $m,s\geq1$.
Since $\alpha(G)\geq v(G)/r$, we can choose an independent set
$I=\{c_1,\ldots,c_m\}$ of size $m$.

Then we greedily construct $L_1,\ldots,L_m$ in this order so that each
$L_j$ has size $s$.
Since $|N_G(c_1)|\geq i-1$ and
\[
  \alpha(G[N_G(c_1)])\geq\frac{|N_G(c_1)|}{r}
  \geq\frac{i-1}{r}\geq s,
\]
we can choose an independent set $L_1\subseteq N_G(c_1)$ of size $s$.
Since $I$ is independent, $L_1\cap I=\varnothing$.
For $2\leq j\leq m$, suppose that $L_1,\ldots,L_{j-1}$ have already been
chosen and are pairwise
disjoint and disjoint from $I$. Put
\[
  W_j:=N_G(c_j)\setminus\bigcup_{\ell<j}L_\ell.
\]
Since $I$ is independent, $W_j$ contains no center. Moreover, $v(G)\leq r^2$
implies
\[
  ms\leq\frac{v(G)}{r}\frac{i-1}{2r}
  \leq\frac{i-1}{2}.
\]
Consequently,
\[
  |W_j|\geq i-1-(j-1)s\geq i-1-ms\geq\frac{i-1}{2}>0.
\]
Since
\[
  \alpha(G[W_j])\geq\frac{|W_j|}{r}
  \geq\frac{i-1}{2r}\geq s,
\]
we can choose an independent set $L_j\subseteq W_j$ of size
$s$. This completes the construction.

Hence
\[
  \sum_{j=1}^m|L_j|=ms
  \geq\frac{v(G)}{2r}\frac{i-1}{4r}
  =\frac{v(G)(i-1)}{8r^2},
\]
as required.
\end{proof}

A \emph{proper minor} of a graph $H$ is a minor not isomorphic to $H$.
Lemma~\ref{lem:critical} below provides the minimum-degree condition
required in Lemma~\ref{lem:packing} without decreasing the chromatic
number. It uses the classical critical-graph argument originating in the
work of Dirac~\cite{dirac-critical}.

\begin{lemma}[Dirac~\cite{dirac-critical}]\label{lem:critical}
Let $G$ be a nonempty graph. Then $G$ has a nonempty minor $H$ such that
$\chi(H)=\chi(G)$ and $\delta(H)\geq\chi(G)-1$, and every proper minor
of $H$ has chromatic number at most $\chi(G)-1$.
\end{lemma}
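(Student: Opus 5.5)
The plan is to take a minor $H$ of $G$ that is minimal, with respect to the minor order, among all minors of $G$ with chromatic number $\chi(G)$. Such an $H$ exists because $G$ is finite and nonempty, and it is nonempty because $\chi(H)=\chi(G)\geq1$. Put $k:=\chi(G)$. The first two conclusions of Lemma~\ref{lem:critical} then come from the usual deletion and contraction arguments.

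Minimality already gives the third conclusion. If $H'$ is a proper minor of $H$, then $H'$ is also a minor of $G$. Since $H'$ has strictly fewer vertices plus edges than $H$, minimality forces $\chi(H')\neq k$. Moreover $\chi(H')\leq\chi(H)=k$, because a minor's coloring number cannot exceed that of the graph it comes from only in the following sense: deleting vertices or edges never increases $\chi$. Contraction can increase $\chi$, however, so I will not use this claim. Instead I will choose $H$ minimal among all minors of $G$ with $\chi\geq k$. Since $\chi(G)=k$, this set is nonempty, and then every proper minor $H'$ of $H$ has $\chi(H')\leq k-1$.

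Next I show $\chi(H)=k$. We have $\chi(H)\geq k$ by the choice of $H$. For the reverse inequality, if $H$ has an edge $e$, then $H-e$ is a proper minor, so $\chi(H-e)\leq k-1$. Adding $e$ back and recoloring one endpoint with a new color gives $\chi(H)\leq k$. If $H$ has no edges, then $\chi(H)=1$, so $k\leq1$ and hence $\chi(H)=k$.

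Finally, the minimum-degree bound. Let $v\in V(H)$. If $H=\{v\}$, then $k=1$ and $\deg(v)=0\geq k-1$. Otherwise $H-v$ is a nonempty proper minor, so it has a proper coloring with at most $k-1$ colors. If $\deg_H(v)\leq k-2$, some color among these $k-1$ is missing from $N_H(v)$. Giving $v$ that color yields a $(k-1)$-coloring of $H$, contradicting $\chi(H)=k$. Therefore $\delta(H)\geq k-1$.

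The main subtlety is the non-monotonicity of $\chi$ under contraction. Defining minimality via the condition $\chi\geq k$, rather than $\chi=k$, handles this cleanly.
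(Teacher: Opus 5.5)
Your proof is correct and uses the same argument as the paper: take a minor-minimal graph $H$ with large chromatic number, then get the degree bound by extending a $(k-1)$-coloring of $H-v$. The only difference is how you handle the fact that contraction can increase $\chi$. You minimize subject to $\chi\ge k$ and recover $\chi(H)\le k$ from $\chi(H)\le\chi(H-e)+1$. The paper minimizes subject to $\chi=k$ and rules out a proper minor $J$ with $\chi(J)\ge k$ by passing to an induced subgraph of $J$ with chromatic number exactly $k$; that observation would also have rescued the first attempt you abandoned in your opening paragraph, which you should delete from a final write-up.
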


\begin{proof}
Choose a minor $H$ of $G$ that is minor-minimal subject to $\chi(H)=\chi(G)$.
Such a choice exists because $G$ is a minor of $G$.
If a proper minor $J$ of $H$ has $\chi(J)\geq\chi(G)$, then $J$ contains an
induced subgraph with chromatic number $\chi(G)$, contradicting the minimality of $H$.
Thus every proper minor of $H$ has chromatic number at most $\chi(G)-1$.
Moreover, $H$ is nonempty. If $\chi(G)=1$, the conclusion is immediate.
Now suppose $\chi(G)\geq2$. If some vertex $v\in V(H)$ satisfies
$\deg_H(v)\leq\chi(G)-2$, a proper $(\chi(G)-1)$-coloring of $H-v$ extends to $v$,
a contradiction.
\end{proof}

Starting from a graph $G$ with $v(G)\leq r^2$, we repeatedly apply
Lemma~\ref{lem:critical} and
contract vertex-disjoint induced stars whose centers form an independent
set. Lemmas~\ref{lem:lift}--\ref{lem:critical} ensure that the chromatic
number decreases by exactly one at each step, while the number of vertices
decreases by a proportion depending on the current chromatic number.
If $\chi(G)$ is too large, this process yields a graph with fewer vertices
than its chromatic number, a contradiction. This gives the upper bound
on $\chi(G)$ in the following lemma.

\begin{lemma}\label{lem:abstract-small}
Let $r\geq2$ be an integer, and let $G$ be a nonempty graph with
$v(G)\leq r^2$. Suppose that every nonempty minor $H$ of $G$ satisfies
$\alpha(H)\geq v(H)/r$. Then
\[
  \chi(G)\leq2r+4r\sqrt{\log^{+}\!\left(\frac{v(G)}{2r}\right)}.
\]
\end{lemma}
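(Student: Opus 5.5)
We argue by iterating Lemma~\ref{lem:critical}, Lemma~\ref{lem:packing} and Lemma~\ref{lem:lift}. Put $k:=\chi(G)$ and suppose, for contradiction, that $k>2r+4r\sqrt{\log^{+}(v(G)/(2r))}$; in particular $k\geq 2r+1$. We build a sequence of minors $G=G_0',G_0,G_1',G_1,\ldots$ of $G$. At step $j$ we have a minor $G_j'$ with $\chi(G_j')=k-j$. We apply Lemma~\ref{lem:critical} to obtain a minor $G_j$ with $\chi(G_j)=k-j$, $\delta(G_j)\geq k-j-1$ and $v(G_j)\leq v(G_j')$. Every minor of $G_j$ is a minor of $G$, so the independence hypothesis is inherited, and $v(G_j)\leq v(G)\leq r^2$.

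While $i:=k-j\geq 2r+1$, Lemma~\ref{lem:packing} applies to $G_j$ with this $i$. It gives vertex-disjoint induced stars with independent centers and total leaf count at least $v(G_j)(i-1)/(8r^2)$. Contracting each star removes exactly its leaves, so the resulting graph $G_{j+1}'$ satisfies
\[
v(G_{j+1}')\leq v(G_j)\Bigl(1-\frac{i-1}{8r^2}\Bigr).
\]
By Lemma~\ref{lem:lift}, $\chi(G_{j+1}')\geq \chi(G_j)-1=k-j-1$. It is also a minor of $G_j$, and since $G_j$ is minor-minimal from Lemma~\ref{lem:critical}, either $G_{j+1}'=G_j$ (impossible, since at least one leaf is removed) or $\chi(G_{j+1}')\leq k-j-1$. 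Hence $\chi(G_{j+1}')=k-j-1$ exactly, and the induction continues.

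We run this for $j=0,1,\ldots,J-1$, where $J:=k-2r$, so that every step has $i\geq 2r+1$. Using $1-x\leq e^{-x}$, we get
\[
v(G_J')\leq v(G)\exp\Bigl(-\frac{1}{8r^2}\sum_{i=2r+1}^{k}(i-1)\Bigr)\leq v(G)\exp\Bigl(-\frac{(k-2r)^2}{16r^2}\Bigr),
\]
where the last step uses $\sum_{i=2r+1}^{k}(i-1)\geq \sum_{\ell=0}^{k-2r-1}(2r+\ell)\geq (k-2r)^2/2$. On the other hand, $\chi(G_J')=2r$, so $v(G_J')\geq 2r$. Combining the two bounds gives $(k-2r)^2\leq 16r^2\log^{+}(v(G)/(2r))$, that is, $k\leq 2r+4r\sqrt{\log^{+}(v(G)/(2r))}$. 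This contradicts our assumption. The constants match the statement exactly, which suggests this is the intended route.

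The main obstacle is the bookkeeping that keeps Lemma~\ref{lem:packing} applicable at every step. The hypotheses $v\leq r^2$ and the minor-closed independence condition survive because each graph in the sequence is a minor of $G$ with no more vertices. The hypothesis $i\geq 2r+1$ holds by our choice of $J$. The chromatic number must drop by exactly one per step, since a larger drop would break the summation; minor-minimality from Lemma~\ref{lem:critical} together with Lemma~\ref{lem:lift} pins the drop to exactly one. The estimate of the sum by $(k-2r)^2/2$ is routine.
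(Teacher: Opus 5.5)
Your proposal is correct and follows the paper's route: iterate Lemma~\ref{lem:critical}, Lemma~\ref{lem:packing} and Lemma~\ref{lem:lift}, so the chromatic number drops by exactly one per step while the order shrinks by a factor $1-(i-1)/(8r^2)$, and finish by comparing with $v\geq 2r$. The only difference is cosmetic: you bound $\sum_{i=2r+1}^{k}(i-1)\geq (k-2r)^2/2$ directly, whereas the paper keeps the exact sum $\bigl(k(k-1)-2r(2r-1)\bigr)/2$ and solves the quadratic inequality using monotonicity of $x(x-1)$, so your finish is slightly cleaner.
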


\begin{proof}[Proof of Lemma~\ref{lem:abstract-small}]
If $\chi(G)\leq2r$, the desired bound is
immediate. We may therefore assume that $\chi(G)>2r$.

Apply Lemma~\ref{lem:critical} to $G$, obtaining a minor
$F_{\chi(G)}$ such that $\chi(F_{\chi(G)})=\chi(G)$ and
$\delta(F_{\chi(G)})\geq\chi(G)-1$, and every proper minor of
$F_{\chi(G)}$ has chromatic number at most $\chi(G)-1$.
Then inductively construct a sequence of minors $F_{\chi(G)},F_{\chi(G)-1},\ldots,F_{2r}$
of $G$ such that, for every $2r\leq i\leq\chi(G)$, we have $\chi(F_i)=i$
and $\delta(F_i)\geq i-1$, and every proper minor of $F_i$ has chromatic
number at most $i-1$.
Moreover, for every $2r+1\leq i\leq\chi(G)$,
\[
  v(F_{i-1})\leq v(F_i)\left(1-\frac{i-1}{8r^2}\right).
\]

Fix an integer $i$ with $2r+1\leq i\leq\chi(G)$, and suppose that $F_i$
has been constructed. Then, by the induction hypothesis, $F_i$ is a minor of $G$,
$\chi(F_i)=i$, $\delta(F_i)\geq i-1$,
and every proper minor of $F_i$ has chromatic number at most $i-1$.
By Lemma~\ref{lem:packing}, $F_i$ contains
pairwise vertex-disjoint induced stars with independent centers and with
a total of $L_i$ leaves, where
\[
  L_i\geq\frac{v(F_i)(i-1)}{8r^2}.
\]
Contract these stars simultaneously to obtain a minor $H_i$.
Lemma~\ref{lem:lift} gives $\chi(H_i)\geq i-1$. Since at least one leaf
is contracted, $H_i$ is a proper minor of $F_i$. Hence $\chi(H_i)\leq i-1$,
and consequently $\chi(H_i)=i-1$.

Apply Lemma~\ref{lem:critical} to $H_i$ to obtain
$F_{i-1}$ such that $\chi(F_{i-1})=\chi(H_i)=i-1$ and
$\delta(F_{i-1})\geq(i-1)-1$,
and every proper minor of $F_{i-1}$ has chromatic number at most $(i-1)-1$.
Its order satisfies
\[
  v(F_{i-1})\leq v(H_i)=v(F_i)-L_i
  \leq v(F_i)\left(1-\frac{i-1}{8r^2}\right).
\]
Moreover, $F_{i-1}\preccurlyeq_{\mathrm m} H_i\preccurlyeq_{\mathrm m} F_i\preccurlyeq_{\mathrm m} G$, so the induction
hypothesis holds at the next step. This completes the
construction down to $F_{2r}$.

Since $\chi(F_{2r})=2r$, we have $v(F_{2r})\geq2r$.
For every $2r\leq i\leq\chi(G)$, $i\leq v(F_i)\leq r^2$, and hence
$0<(i-1)/(8r^2)<1$. By
$1-x\leq\exp(-x)$, we obtain
\begin{align*}
  2r&\leq v(F_{2r})
  \leq v(G)\prod_{i=2r+1}^{\chi(G)}\left(1-\frac{i-1}{8r^2}\right)
  \leq v(G)\exp\left(-\frac{1}{8r^2}
      \sum_{i=2r+1}^{\chi(G)}(i-1)\right)\\
  &=v(G)\exp\left(-\frac{\chi(G)(\chi(G)-1)-2r(2r-1)}{16r^2}\right).
\end{align*}
Taking logarithms and rearranging gives
\[
  \chi(G)(\chi(G)-1)\leq2r(2r-1)+16r^2\log\frac{v(G)}{2r}.
\]

Put $z:=\log(v(G)/(2r))$. Since $v(G)\geq \chi(G)>2r$, we have $z>0$. Observe that
\begin{align*}
  (2r+4r\sqrt z)(2r+4r\sqrt z-1)
  &=2r(2r-1)+(16r^2-4r)\sqrt z+16r^2z\\
  &\geq2r(2r-1)+16r^2z.
\end{align*}
The function $x(x-1)$ is strictly increasing for $x\geq1/2$.
Hence $\chi(G)\leq2r+4r\sqrt z$, as required.
\end{proof}

For convenience, we restate Theorem~\ref{thm:small-explicit} below.

\begin{restatedsmalltheorem}
Let $t\geq2$ be an integer, and let $G$ be a nonempty $K_t$-minor-free
graph with $v(G)\leq4(t-1)^2$. Then
\[
  \chi(G)\leq4(t-1)+8(t-1)
  \sqrt{\log^{+}\!\left(\frac{v(G)}{4(t-1)}\right)}.
\]
\end{restatedsmalltheorem}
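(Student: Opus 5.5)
The plan is to apply Lemma~\ref{lem:abstract-small} with $r:=2(t-1)$ and check that its two hypotheses follow from $K_t$-minor-freeness and the assumed order bound.

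\emph{The value $t=2$.} Then $r=2$, so the integrality condition $r\geq2$ of Lemma~\ref{lem:abstract-small} holds, and I would not need a separate case. As a sanity check, a $K_2$-minor-free graph is edgeless and has $\chi(G)=1\leq4$, so the claimed bound is trivial there anyway. In general $t\geq2$ gives $r=2(t-1)\geq2$.

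\emph{Hypotheses of the lemma.} First, I would verify the order condition: $v(G)\leq4(t-1)^2=(2(t-1))^2=r^2$, exactly as Lemma~\ref{lem:abstract-small} requires. Second, I would verify the independence condition. Every nonempty minor $H$ of $G$ satisfies $\alpha(H)\geq v(H)/(2(t-1))=v(H)/r$ by Theorem~\ref{thm:minor-independence}.

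\emph{Conclusion.} Lemma~\ref{lem:abstract-small} then yields
\[
  \chi(G)\leq2r+4r\sqrt{\log^{+}\!\left(\frac{v(G)}{2r}\right)}
  =4(t-1)+8(t-1)\sqrt{\log^{+}\!\left(\frac{v(G)}{4(t-1)}\right)},
\]
which is the stated bound, since $2r=4(t-1)$ and $4r=8(t-1)$.

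There is no real obstacle. The only point needing care is that the choice $r=2(t-1)$ makes the square of $r$ match the order hypothesis $v(G)\leq4(t-1)^2$ exactly, and this is what the identity $(2(t-1))^2=4(t-1)^2$ confirms.
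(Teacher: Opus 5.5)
Your proposal is correct and is exactly the paper's proof. You apply Lemma~\ref{lem:abstract-small} with $r=2(t-1)$, note $v(G)\leq4(t-1)^2=r^2$, and get the independence hypothesis on every nonempty minor from Theorem~\ref{thm:minor-independence}.
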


\begin{proof}[Proof of Theorem~\ref{thm:small-explicit}]
Apply Lemma~\ref{lem:abstract-small} with $r:=2(t-1)$.
Theorem~\ref{thm:minor-independence} gives $\alpha(H)\geq v(H)/(2(t-1))$
for every nonempty minor $H$ of $G$.
\end{proof}

\section{Proof of main result}\label{sec:main-proof}

We use the following reduction of Delcourt and
Postle~\cite[Theorem~1.6]{delcourt-postle}.

\begin{theorem}[Delcourt--Postle~\cite{delcourt-postle}]\label{thm:dp}
There exists an integer $C_{\ref{thm:dp}}\geq1$ such that the following holds:
Let $t\geq3$ be an integer. Let $G$ be a nonempty graph and let
\[
  f_{\ref{thm:dp}}(G,t):=\max_{H\subseteq G}\left\{
    \frac{\chi(H)}{a}:\ t\geq a\geq\frac{t}{\sqrt{\log t}},\ 
    v(H)\leq C_{\ref{thm:dp}}a\log^4 a,\ H\text{ is $K_a$-minor-free}
  \right\}.
\]
If $G$ is $K_t$-minor-free, then
\[
  \chi(G)\leq C_{\ref{thm:dp}}t\bigl(1+f_{\ref{thm:dp}}(G,t)\bigr).
\]
\end{theorem}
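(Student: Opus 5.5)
The statement is Theorem~1.6 of Delcourt and Postle~\cite{delcourt-postle}, restated in the form used here. I would not reprove it from scratch. Instead I would show that our formulation follows from theirs, and then recall the architecture of their argument to justify the one change we make. That change is that the maximum defining $f_{\ref{thm:dp}}(G,t)$ runs only over subgraphs $H\subseteq G$, not over all $K_a$-minor-free graphs on $C_{\ref{thm:dp}}a\log^4 a$ vertices.

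\textbf{Step 1: the unrestricted version implies ours.} In their statement the coloring quantity is a function $f(t)$ that bounds $\chi(H)/a$ for every small $K_a$-minor-free graph $H$ with $t\geq a\geq t/\sqrt{\log t}$. Every such $H$ arising in their proof is obtained by taking subgraphs of $G$, or of graphs built from $G$ by deletions. So the argument only ever needs the bound for $H\subseteq G$, and replacing $f(t)$ by the restricted maximum $f_{\ref{thm:dp}}(G,t)$ is legitimate. To confirm this, I would trace the single place where small graphs are colored. This is the ``dense small subgraph'' branch of the induction, where a subgraph of $G$ with high density and few vertices is found and colored directly.

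\textbf{Step 2: outline of their induction.} The argument follows the Norin--Postle--Song framework. Take a minimal counterexample. By Kostochka--Thomason and Mader-type arguments it is highly connected and has large minimum degree. A density-increment lemma then gives a dichotomy.
\begin{itemize}
\item[(a)] $G$ contains a subgraph $H$ with $v(H)\leq C a\log^4 a$ that is $K_a$-minor-free for some $a$ in the range $[t/\sqrt{\log t},\,t]$. It is colored with $a\cdot f$ colors, and the rest of $G$ is handled by induction.
\item[(b)] $G$ contains many disjoint dense pieces that can be linked, using the high connectivity, into a $K_t$ minor. This is a contradiction.
\end{itemize}
The factor $C_{\ref{thm:dp}}t$ absorbs the losses from the connectivity reductions and from combining colorings along separations.

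\textbf{Main obstacle.} The delicate part is the linking step in (b). One must show that the small dense subgraphs produced by the density increment, each having a $K_a$ minor, can be connected through the highly connected $G$ into a $K_t$ minor without using too many vertices. This is exactly where the bound $v(H)\leq C a\log^4 a$ and the lower bound $a\geq t/\sqrt{\log t}$ come from. Since this is the technical heart of~\cite{delcourt-postle}, I would cite it rather than reproduce it. The only verification I would do myself is Step~1: that every graph whose chromatic number is bounded via $f$ is a subgraph of the input graph $G$.
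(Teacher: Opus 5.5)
Your conclusion matches the paper: the statement is Delcourt and Postle's Theorem~1.6, and the paper gives no argument beyond citing~\cite{delcourt-postle}. The problem is the premise of your Step~1. The ``change'' it sets out to justify does not exist. In their Theorem~1.6, $f_{\ref{thm:dp}}(G,t)$ is already defined as a maximum over small $K_a$-minor-free \emph{subgraphs} $H\subseteq G$; that is exactly why it depends on $G$. So the citation applies as is, and there is nothing to trace through their proof.

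Step~1 as written would not supply such a restriction even if one were needed. A $G$-independent $f(t)$ covering all small $K_a$-minor-free graphs satisfies $f(t)\geq f_{\ref{thm:dp}}(G,t)$. The bound $\chi(G)\leq Ct(1+f(t))$ is therefore weaker than the claimed one, so ``the unrestricted version implies ours'' has the implication backwards. The only valid route would be to re-verify that every small graph whose coloring their proof invokes is a subgraph of $G$. You announce this check but do not carry it out.

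Your Step~2 outline should also not be presented as their argument. For example, branch (a) as you describe it colors $H$ with about $a f_{\ref{thm:dp}}(G,t)$ colors and the rest of $G$ by induction. That uses up to $Ct\bigl(1+f_{\ref{thm:dp}}(G,t)\bigr)+af_{\ref{thm:dp}}(G,t)$ colors, so it does not close an induction. Drop Steps~1 and~2 and cite the theorem directly, as the paper does.
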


The idea behind the proof of Theorem~\ref{thm:dp} is as follows. If $G$
has sufficiently large chromatic number, then they can find sufficiently
many highly connected subgraphs, each containing a relatively small
complete minor model. They then use linkage techniques to join these
models into a large complete minor model.

\begin{proof}[Proof of Theorem~\ref{thm:global}]
Fix the absolute integer $C_{\ref{thm:dp}}$ in Theorem~\ref{thm:dp}.
We can choose a sufficiently large integer $C\geq1$ such that, for every integer $a\geq3$,
\[
  C_{\ref{thm:dp}}a(\log a)^4\leq4(Ca-1)^2.
\]

Choose a pair $(a,H)$ attaining the maximum in the definition of $f_{\ref{thm:dp}}(G,t)$.
Since $t/\sqrt{\log t}>2$ for $t\geq3$, we have $3\leq a\leq t$.
By the choice of $H$,
\[
  v(H)\leq C_{\ref{thm:dp}}a(\log a)^4\leq4(Ca-1)^2.
\]
The graph $H$ is $K_a$-minor-free and hence also $K_{Ca}$-minor-free.
Applying Theorem~\ref{thm:small-explicit} with parameter $Ca$ gives
\[
  \chi(H)\leq4(Ca-1)+8(Ca-1)
  \sqrt{\log^{+}\!\left(\frac{v(H)}{4(Ca-1)}\right)}.
\]
Since $4(Ca-1)\geq a$, $C_{\ref{thm:dp}}\geq1$, and $3\leq a\leq t$, it follows that
\begin{align*}
  f_{\ref{thm:dp}}(G,t)=\frac{\chi(H)}a
  &\leq4C+8C\sqrt{\log^{+}\!\bigl(C_{\ref{thm:dp}}(\log a)^4\bigr)}
  =4C+8C\sqrt{\log C_{\ref{thm:dp}}+4\log\log a}\\
  &\leq4C+8C\sqrt{\log C_{\ref{thm:dp}}+4\log\log t}.
\end{align*}
Theorem~\ref{thm:dp} now yields
\[
  \chi(G)\leq C_{\ref{thm:dp}}t\left(1+4C+8C\sqrt{\log C_{\ref{thm:dp}}+4\log\log t}\right).
\]
Since $\log\log t\geq\log\log3>0$ for $t\geq3$, and $C_{\ref{thm:dp}}$ and $C$ are
absolute constants, there exists an absolute constant $C_{\ref{thm:global}}>0$ such that
\[
  \chi(G)\leq C_{\ref{thm:global}}t\sqrt{\log\log t}
\]
for every integer $t\geq3$, as required.
\end{proof}

\medskip
\noindent\textbf{Statement of AI Use.}
The central proof idea in this manuscript was developed with the assistance of
GPT 5.6 Sol and GPT 6 Astra. The author subsequently refined the idea and wrote
the manuscript, using the same models to assist with language polishing.


\begin{thebibliography}{99}

\bibitem{appel-haken}
K. Appel and W. Haken,
\emph{Every planar map is four colorable},
Contemporary Mathematics \textbf{98},
American Mathematical Society, Providence, RI, 1989,
\href{https://bookstore.ams.org/conm-98/}{AMS book page}.

\bibitem{delcourt-postle}
M. Delcourt and L. Postle,
\emph{Reducing linear Hadwiger's conjecture to coloring small graphs},
J. Amer. Math. Soc. \textbf{38} (2025), no.~2, 481--507,
\href{https://doi.org/10.1090/jams/1047}{doi:10.1090/jams/1047}.

\bibitem{diestel}
R. Diestel,
\emph{Graph Theory}, sixth edition,
Springer, 2025,
\href{https://diestel-graph-theory.com/}{author's book website}.

\bibitem{dirac-critical}
G. A. Dirac,
\emph{A property of 4-chromatic graphs and some remarks on critical graphs},
J. London Math. Soc. \textbf{27} (1952), no.~1, 85--92,
\href{https://doi.org/10.1112/jlms/s1-27.1.85}{doi:10.1112/jlms/s1-27.1.85}.

\bibitem{duchet-meyniel}
P. Duchet and H. Meyniel,
\emph{On Hadwiger's number and the stability number},
in B. Bollob\'as (ed.), Graph Theory,
North-Holland Mathematics Studies \textbf{62},
North-Holland, 1982, 71--73,
\href{https://doi.org/10.1016/S0304-0208(08)73549-7}
{doi:10.1016/S0304-0208(08)73549-7}.

\bibitem{hadwiger}
H. Hadwiger,
\emph{\"Uber eine Klassifikation der Streckenkomplexe},
Vierteljschr. Naturforsch. Ges. Z\"urich \textbf{88} (1943), 133--142.

\bibitem{kostochka}
A. V. Kostochka,
\emph{Lower bound of the Hadwiger number of graphs by their average degree},
Combinatorica \textbf{4} (1984), no.~4, 307--316,
\href{https://doi.org/10.1007/BF02579141}{doi:10.1007/BF02579141}.

\bibitem{lin-small-connected}
X. Lin,
\emph{An asymptotically tight $t\log t$ bound for $k$-connected subgraphs in dense $K_t$-minor-free graphs},
preprint, 2026,
\href{https://arxiv.org/abs/2607.21222v3}{arXiv:2607.21222v3}.

\bibitem{norin-postle-song}
S. Norin, L. Postle and Z.-X. Song,
\emph{Breaking the degeneracy barrier for coloring graphs with no $K_t$ minor},
Adv. Math. \textbf{422} (2023), article 109020,
\href{https://doi.org/10.1016/j.aim.2023.109020}{doi:10.1016/j.aim.2023.109020}.

\bibitem{reed-seymour}
B. Reed and P. Seymour,
\emph{Fractional colouring and Hadwiger's conjecture},
J. Combin. Theory Ser. B \textbf{74} (1998), no.~2, 147--152,
\href{https://doi.org/10.1006/jctb.1998.1835}{doi:10.1006/jctb.1998.1835}.

\bibitem{four-color}
N. Robertson, D. Sanders, P. Seymour and R. Thomas,
\emph{The four-colour theorem},
J. Combin. Theory Ser. B \textbf{70} (1997), no.~1, 2--44,
\href{https://doi.org/10.1006/jctb.1997.1750}{doi:10.1006/jctb.1997.1750}.

\bibitem{robertson-seymour-thomas}
N. Robertson, P. Seymour and R. Thomas,
\emph{Hadwiger's conjecture for $K_6$-free graphs},
Combinatorica \textbf{13} (1993), no.~3, 279--361,
\href{https://doi.org/10.1007/BF01202354}{doi:10.1007/BF01202354}.

\bibitem{thomason}
A. Thomason,
\emph{An extremal function for contractions of graphs},
Math. Proc. Cambridge Philos. Soc. \textbf{95} (1984), no.~2, 261--265,
\href{https://doi.org/10.1017/S0305004100061521}{doi:10.1017/S0305004100061521}.

\bibitem{wagner}
K. Wagner,
\emph{\"Uber eine Eigenschaft der ebenen Komplexe},
Math. Ann. \textbf{114} (1937), 570--590,
\href{https://doi.org/10.1007/BF01594196}{doi:10.1007/BF01594196}.

\end{thebibliography}
\end{document}